\newtheorem{Theorem}{Theorem}
\newtheorem{theorem}[equation]{Theorem}
\newtheorem{lemma}[equation]{Lemma}
\theoremstyle{definition}
\renewcommand{\phi}{\varphi}
\newcommand{\D}{\mathrm{d}}
\newcommand{\E}{\mathrm{e}}
\newcommand{\ti}{\tilde}
\renewcommand{\(}{\bigl(}
\renewcommand{\)}{\bigr)\vphantom{)}}
\newcommand{\const}{\operatorname{const}}
\newcommand{\eps}{\varepsilon}
\newcommand{\si}{\sigma}
\newcommand{\ga}{\gamma}
\newcommand{\al}{\alpha}
\newcommand{\be}{\beta}
\newcommand{\Ex}{\mathbb E\,}
\newcommand{\Z}{\mathbb Z}
\renewcommand{\Pr}[1]{\mathbb{P}\mskip1.5mu\(\mskip1.5mu#1\mskip1.5mu\)}
\newcommand{\PR}[1]{\mathbb{P}\mskip1.5mu\bigg(\mskip1.5mu#1\mskip1.5mu\bigg)}
\begin{document}

\title{Moderate deviations on different scales:\\
 no relations}

\author{Boris Tsirelson}

\date{}
\maketitle

\begin{abstract}
An example of a discrete-time stationary random process whose sums
follow the normal approximation within a given part of the region of
moderate deviations, but violate it outside this part.
\end{abstract}

\setcounter{tocdepth}{1}
\tableofcontents

\section*{Introduction}
\addcontentsline{toc}{section}{Introduction}
In the simplest classical case of sums $ S_n = X_1+\dots+X_n $ of
bounded i.i.d.\ random variables $ X_k $ with mean $ 0 $ and variance
$ 1 $, CLT (Central limit theorem) states the normal approximation
\begin{equation}\label{*}
\PR{ \frac{S_n}{\sqrt n} > u } \to \int_u^\infty \frac1{\sqrt{2\pi}}
\E^{-t^2/2} \, \D t \quad \text{as } n \to \infty \, ;
\end{equation}
MDP (moderate deviations principle) extends the normal approximation
to larger $ u $,
\begin{equation}\label{**}
\ln \PR{ \frac{S_n}{\sqrt n} > u_n } \sim -\frac12 u_n^2 \quad
\text{as } n \to \infty
\end{equation}
whenever $ u_n \to \infty $ and $ n^{-1/2} u_n \to 0 $ (``$\sim$''
means that the ratio converges to $1$); and LDP (large deviations
principle) treats even larger $ u $,
\begin{equation}\label{***}
\ln \PR{ \frac{S_n}{\sqrt n} > a\sqrt n } \sim -n I(a) \quad
\text{as } n \to \infty \, ,
\end{equation}
where the rate function $ I(\cdot) $ satisfies
\begin{equation}\label{****}
I(a) \sim \frac12 a^2 \quad \text{as } a \to 0 \, .
\end{equation}

It may be tempting to deduce \eqref{**} from \eqref{***}
and \eqref{****} by taking $ a_n \sqrt n = u_n $; however, \eqref{***}
does not claim uniformity in $ a $ (for small $ a $). Under some
additional assumptions, MDP can indeed be deduced from LDP,
see \cite[Th.~1.2]{Wu}.

We consider a discrete-time stationary random process $ (X_k)_k
$. That is, ($ X_k $ need not be independent, and) for every $ n $ the
joint distribution of $ X_m, \dots, X_{m+n} $ does not depend on $ m
$.

Numerous works prove LDP and/or MDP under various conditions on $
(X_k)_k $. Still, in general, relations between LDP and MDP, as well
as between MDP for different sequences $ (u_n)_n $, are poorly
understood.

\begin{Theorem}\label{Th1}
Assume that an open set $ G_0 \subset (0,1/2) $, bounded away from $ 0
$, consist of a finite number of intervals, and $ G_1 $ is the
interior of the complement $ (0,1/2) \setminus G_0 $. Then there
exists a stationary process $ (X_k)_k $ such that for all $ \ga \in
G_0 \cup G_1 $ and $ c \in (0,\infty) $,
\[
\lim_{n\to\infty} \frac1{n^{2\ga}} \ln \PR{ \frac{X_1+\dots+X_n}{\sqrt
n} > c n^\ga } = \begin{cases}
 -\frac12 c^2 &\text{if } \ga \in G_1, \\
 0 &\text{if } \ga \in G_0.
\end{cases}
\]
\end{Theorem}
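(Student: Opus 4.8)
The plan is to take $(X_k)$ to be an i.i.d.\ Gaussian sequence perturbed, then renormalized, by a stationary field of rare ``bursts'' whose frequencies and sizes are tuned to $G_0$. I would first reduce to $G_0=(a,b)$, a single interval with $0<a<b\le 1/2$: for each of the finitely many component intervals build a process violating the normal approximation on exactly that interval (and obeying it off the interval's closure), take independent copies, and superimpose them with fixed weights whose squares sum to $1$. One checks that a weighted sum of independent sequences obeying the normal approximation at a scale $\ga$ still obeys it there (balanced splitting of the deviation for the lower bound, a union bound over splittings for the upper bound), while a sequence violating it at $\ga$ retains the violation after adding independent ``clean'' sequences (restrict to the event that the violating summand is large and the rest not very negative).

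For $G_0=(a,b)$, set $X_k=(1+\ti\si^2)^{-1/2}(\xi_k+\zeta_k)$, with $\xi_k$ i.i.d.\ $N(0,1)$ independent of $\zeta$, and $\zeta_k$ the sum of contributions of bursts placed by an i.i.d.\ field on $\Z$: each site independently starts, with small probability, a burst carrying an independent mark $(\ell,k,\pm)$ that adds $\pm\ell^{\ga^{(k)}-1/2}$ to $\zeta$ over the $\ell$ sites it covers, where $\ga^{(k)}\in(a,b)$ with $\ga^{(k)}\uparrow b$ and the per-site rate $q(\ell,k)$ satisfies $q(\ell,k)\,\ell\approx\exp(-\ell^{2a}/\log\ell-k)$. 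Then $\ti\si^2:=\sum_{\ell,k}q(\ell,k)\,\ell^{2\ga^{(k)}+1}$ is finite, $\Var(X_k)=1$, and --- since long bursts ($\ell\gtrsim n$) contribute only $o(1)$ to $\Var(\zeta_1+\dots+\zeta_n)$ --- the partial sums of $\zeta$ are asymptotically $N(0,\ti\si^2 n)$ and, at scales above $b$ (where no single burst can reach level $\sqrt n\,cn^{\ga}$), obey the normal approximation with variance $\ti\si^2$; proving this last point requires a blocking/coupling argument, the bursts being light--tailed in both length and height.

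The three regimes then separate. For $\ga\in(a,b)$: conditioning on the event that one burst of some calibration $\ga^{(k)}>\ga$ and length $\ell\asymp n$ covers $\{1,\dots,n\}$ --- shifting $X_1+\dots+X_n$ by $\asymp\sqrt n\,n^{\ga^{(k)}}\gg\sqrt n\,cn^{\ga}$ and leaving $\xi$ only to avoid being very negative --- gives $\PR{X_1+\dots+X_n>\sqrt n\,cn^{\ga}}\gtrsim q(\ell,k)\,\ell=\exp(-o(n^{2\ga}))$, since $\ga>a$; with the trivial upper bound this yields the limit $0$. For $\ga<a$: any burst reaching level $\sqrt n\,cn^{\ga}$ overshoots, so the burst route to the deviation costs at most $\exp(-n^{2a}/\log n+o(\,\cdot\,))=\exp(-\omega(n^{2\ga}))$, negligible against the rescaled Gaussian cost $\exp(-\tfrac12 c^2 n^{2\ga})$ of $\xi$ alone; for $\ga>b$, no burst helps individually and one invokes the normal approximation for the sums of $\zeta$, optimizing the split of the deviation between $\xi$ and $\zeta$ (variances $1$ and $\ti\si^2$ combine to $1+\ti\si^2$, which the normalization cancels, restoring the constant $-\tfrac12 c^2$).

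The main obstacle is this tuning. The same bursts must be frequent enough to destroy the normal approximation on all of $(a,b)$, yet rare enough to leave it intact below $a$, too small individually to disturb it above $b$, and rare enough overall to add only a bounded extra variance --- removed by the normalization --- rather than a variance shift at every scale. What makes this possible, and is the heart of the verification, is the polynomial gap between exponents: the burst frequency $\exp(-n^{2a}/\log n)$ lies strictly between $\exp(-o(n^{2\ga}))$ (needed for the violation when $\ga>a$) and $\exp(-\omega(n^{2\ga}))$ (needed for cleanness when $\ga<a$). A secondary difficulty is the moderate--deviation statement for $\zeta_1+\dots+\zeta_n$ above $b$, handled by a standard blocking argument to tame the long--range dependence coming from long bursts.
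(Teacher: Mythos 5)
Your construction is genuinely different from the paper's. The paper uses a steady-state renewal Markov chain $(A_t,B_t)$ and sets $X_t=\pm\phi(A_t,B_t)$ with $\phi(k,l)=(k+l)^{-\beta}\One_{\{k\le\sqrt{k+l}\}}$; the decomposition $S_n=S'_n+\ti S_n+S''_n$ into boundary and interior excursions is \emph{conditionally independent given the chain}, and the MDP for $\ti S_n$ is pulled straight from a renewal-reward theorem. That clean conditional structure is what the paper buys with the Markov chain. Your i.i.d.\ field of overlapping bursts plus an independent Gaussian $\xi$ is a plausible substitute, and you correctly identify the separation of the three regimes as the crux; but you should note one structural consequence of your design choice: because your bursts are rectangular (height $\ell^{\ga^{(k)}-1/2}$ across all $\ell$ sites), a single burst can never push the sum beyond scale $\ga^{(k)}$, which forces the whole countable family $\ga^{(k)}\uparrow b$. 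The paper avoids this by making the burst front-loaded (only the first $\sim\sqrt\tau$ sites of an excursion of length $\tau$ are nonzero), so one boundary excursion of well-chosen, possibly much-longer-than-$n$ duration covers every scale up to $v$ with a single pair $(\al,\be)$.

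Two genuine gaps. First, your $\ga<a$ argument is wrong as written. After the normalization by $(1+\ti\si^2)^{-1/2}$, the sum of the $\xi$'s alone has variance $n/(1+\ti\si^2)$, so ``$\xi$ alone'' gives cost $\exp\bigl(-\tfrac12(1+\ti\si^2)c^2 n^{2\ga}\bigr)$, not $\exp(-\tfrac12 c^2 n^{2\ga})$. To recover $-\tfrac12 c^2$ you must, exactly as in your $\ga>b$ case, establish the MDP for the $\zeta$-sums at scale $\ga<a$ (separating the negligible single-burst route from the ``bulk'' of short bursts, which contributes its share $\ti\si^2$ of the variance) and then optimize the split between $\xi$ and $\zeta$. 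Second, the MDP for $\zeta_1+\dots+\zeta_n$ at scales off $(a,b)$ is the technical heart of your route and is only gestured at (``blocking/coupling''). Your bursts overlap and produce long-range dependence, and the per-burst contributions are stretched-exponential rather than sub-Gaussian; making the blocking argument rigorous here is comparable in difficulty to the renewal-reward input the paper cites, and cannot be treated as routine. (Minor: your $\ti\si^2$ is the limiting variance of $n^{-1/2}(\zeta_1+\dots+\zeta_n)$, not $\Var\zeta_1$, so ``$\Var(X_k)=1$'' should be replaced by the corresponding statement about normalized partial sums.) The reduction to a single interval and the recombination by independence match the paper (which uses LDP contraction for the recombination); that part of your plan is sound.
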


In addition, the process $ (X_k)_k $ constructed in the proof of
Theorem \ref{Th1} satisfies \eqref{***}, \eqref{****}. Also, the
distribution of $ X_1 $ (and every $ X_k $) is bounded and symmetric,
that is, $ \Pr{ |X_1| \le C } = 1 $ for some $ C $, and $ \Pr{ X_1 \le
x } = \Pr{ -X_1 \le x } $ for all $ x $. Also, $ \Ex (X_1 X_n) =
O \( \exp(-n^\eps) \) $ (as $ n \to \infty $) for some $ \eps > 0 $.

\numberwithin{equation}{section}

\section[Constructing a process]
  {\raggedright Constructing a process}
\label{sect1}
We start with a very simple Markov chain well-known in the renewal
theory. Its state space is the countable set $ \{(0,0)\} \cup
\( \{1,2,3,\dots\} \times \{1,2,3,\dots\} \) $.
\[
\begin{gathered}\includegraphics{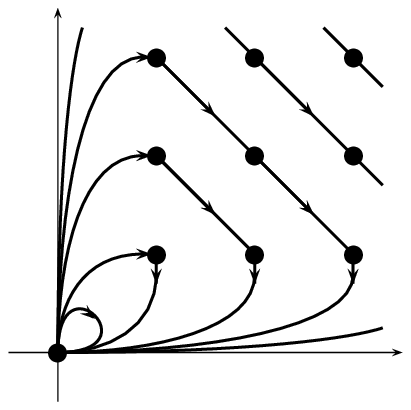}\end{gathered}
\]
The transition probability from $(0,0)$ to $(1,n-1)$ is equal to $ p_n
$ for $ n=2,3,\dots $; and $ p_1 = 1-p_2-p_3-\dots $ is the transition
probability from $(0,0)$ to itself. For $ k>0, l>0 $ the only possible
transition from $(k,l)$ is to $(k+1,l-1)$ if $ l>1 $, otherwise to
$(0,0)$. Thus, the motion consists of excursions of the form $ (0,0)
\mapsto (1,n-1) \mapsto (2,n-2) \mapsto \dots \mapsto (n-1,1) \mapsto
(0,0) $ (for $ n>1 $) or just $ (0,0) \mapsto (0,0) $ (for $n=1$).

An invariant probability measure (if exists) must give equal
probabilities $ \mu_n $ to all points $(k,l)$ with $k+l=n$; here $
n \in \{0\} \cup \{2,3,4,\dots\} $. Such $ \mu $ is invariant if and
only if
\begin{equation}\label{1.*}
\mu_0 p_n = \mu_n \quad \text{for } n=2,3,4,\dots
\end{equation}
(the equality $ \mu_0 p_1 + \mu_2 + \mu_3 + \dots = \mu_0 $
follows). And, of course, $ \mu_0 + \mu_2 + 2\mu_3 + 3\mu_4 + \dots =
1 $, that is, $ \mu_0 (1+p_2+2p_3+\dots) = 1 $ or, equivalently,
\[
\mu_0 ( p_1 + 2p_2 + 3p_3 + \dots ) = 1 \, .
\]
Clearly, $ \mu $ exists if and only if $ p_1 + 2p_2 + 3p_3 + \dots
< \infty $ (finite mean renewal time); and $ \mu $ is unique.

We restrict ourselves to the steady-state Markov chain; that is, the
invariant probability measure $ \mu $ exists, and the state is
distributed $ \mu $ at every instant. Denoting by $ (A_t,B_t) $ the
state of the Markov chain at $ t $ we get a two-dimensional random
process $ (A_t,B_t)_{t\in\Z} $, both stationary and Markovian. The
random set $ \{ t : (A_t,B_t) = (0,0) \} $ is well-known as a
stationary renewal process; its points are called renewal
times. Accordingly, $ A_t $ is called the age (or backwards recurrence
time) at $ t $, and $ B_t $ --- the residual life time (or forward
recurrence time) at $ t $. Note that $ t - A_t $ is the last renewal
time on $ (-\infty,t] $, and $ t + B_t $ --- the first renewal time on
$ [t,\infty) $. The duration $ (t+B_t) - (t-A_t) = A_t + B_t $ of an
excursion (unless $ t $ is a renewal time) is called a renewal
interval.
\[
\begin{gathered}\includegraphics{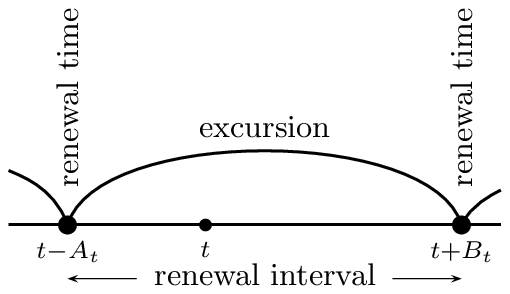}\end{gathered}
\]

Large and moderate deviations will be examined for the stationary
process
\begin{equation}\label{1.1}
X_t = \pm\phi(A_t,B_t) \quad \text{for } t \in \Z
\end{equation}
where a random sign ``$\pm$'' is chosen once for each excursion, and
$ \phi $ is a real-valued function on the state space of the Markov
chain. More formally, we may take $ X_t = S_{t-A_t} \phi(A_t,B_t) $
where $ (S_t)_{t\in\Z} $ is a family of mutually independent
random signs ($-1$ or $+1$ equiprobably), independent of $(A_t,B_t)_t
$. It remains to specify the numbers $ p_n $, or equivalently $ \mu_n
$, and the function $ \phi $.

Given two parameters $ \al, \be $ satisfying
\begin{equation}\label{1.2}
\al > 0 \, , \quad \be \ge 0 \, , \quad \al + 2\be < \frac12 \, ,
\end{equation}
we define the measure $ \mu $ by
\begin{equation}\label{1.3}
n \mu_{n+1} + (n+1) \mu_{n+2} + \dots = \exp \(
-n^\al \) \quad \text{for } n=1,2,\dots
\end{equation}
and the function $ \phi $ by
\begin{equation}\label{1.4}
\phi (k,l) = \begin{cases}
 (k+l)^{-\be} &\text{if } k^2 \le k+l,\\
 0 &\text{otherwise}
\end{cases}
\end{equation}
for $ k,l>0 $; and $ \phi(0,0) = 0 $.

\begin{theorem}\label{1.5}
There exists $ \si \in (0,\infty) $ such that the process $ (\si^{-1}
X_t)_{t\in\Z} $ satisfies Theorem \ref{Th1} for
\begin{gather}
G_0 = (u,v) \, , \quad G_1 = (0,u) \cup (v,0.5) \,
 , \quad \text{where} \notag \\
u = \frac{ \al }{ 2(1-\al-2\be) } \, , \quad v = \frac12 - 2\be \,
 . \label{1.7}
\end{gather}
\end{theorem}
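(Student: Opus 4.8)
The plan is to isolate two competing mechanisms for the event $\{S_n>cn^{\ga+1/2}\}$ (equivalently $\{S_n/\sqrt n>cn^\ga\}$), $S_n:=X_1+\dots+X_n$: a \emph{Gaussian} one, where the excess is a small coherent drift over all $\asymp n$ sites and costs $\asymp c^2n^{2\ga}$ in negative log-probability, and a \emph{single long excursion} one, where one anomalously long renewal interval carries the excess. Two facts come straight from \eqref{1.1}--\eqref{1.4}: $|X_t|\le1$ and $X_t$ is symmetric; and, by the regenerative structure, $\Var(S_n)/n\to\si^2:=\Ex[\eta^2]/\Ex[L]$, where $L$ is a renewal interval, $\eta=\pm\lfloor\sqrt L\rfloor L^{-\be}$ is the signed sum over one excursion, $\Ex[\eta^2]=\sum_\ell p_\ell\lfloor\sqrt\ell\rfloor^2\ell^{-2\be}<\infty$ because $\al>0$, $\si^2>0$, and $\Ex[L]<\infty$ by \eqref{1.3}; passing to $\si^{-1}X_t$, as in the statement, normalises $\si$ to $1$. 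The quantitative input, read off from \eqref{1.3}, \eqref{1.4}: an excursion of length $\ell$ contributes $\pm\lfloor\sqrt\ell\rfloor\ell^{-\be}\asymp\pm\ell^{1/2-\be}$ to $S_n$, but its footprint inside $[1,n]$ is only $\min(\lfloor\sqrt\ell\rfloor,n)$ sites --- so its total contribution is at most $n^{1-2\be}$, and it fits inside $[1,n]$ only when $\ell\lesssim n^2$ --- while $\Pr{L\ge\ell}=\exp(-(1+o(1))\ell^\al)$. Hence a single long excursion can produce an excess $\approx cn^{\ga+1/2}$ only if $cn^{\ga+1/2}\lesssim n^{1-2\be}$, i.e.\ $\ga\le v$, and then at cost $\approx\ell^\al\approx c^{\rho}n^{\theta}$, where $\rho:=\al/(1/2-\be)<1$ (which is exactly the hypothesis $\al+2\be<1/2$) and $\theta:=\rho(\ga+1/2)$; a one-line computation gives $\theta<2\ga$ precisely for $\ga>u$. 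So the heuristic is: $\ga<u$, the Gaussian mechanism is cheaper, answer $-\tfrac12c^2$; $u<\ga<v$, one long excursion is cheaper and its cost is $o(n^{2\ga})$, answer $0$; $\ga>v$, one long excursion cannot reach the target at all, so again $-\tfrac12c^2$. The thresholds $u,v$ of \eqref{1.7} are exactly these two crossovers, and the whole proof is the task of making the three cases above precise.

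\emph{Lower bounds.} First I would prove, for every $\ga\in(0,1/2)$ and every $c>0$, the \emph{Gaussian lower bound} $\liminf n^{-2\ga}\ln\Pr{S_n>cn^{\ga+1/2}}\ge-\tfrac12c^2$. Use the renewal (i.i.d.\ block-sum) decomposition of $S_n$; keep the first $m_n=(1-o(1))n/\Ex L$ excursions after time $1$ (which lie inside $[1,n]$ with overwhelming probability, so contribute $\sum_{j=1}^{m_n}\eta_j$ with $\eta_j$ i.i.d.), and among those drop the ones of length $>K$; since, conditionally on the renewal configuration, the dropped contribution is a \emph{symmetric} sum $\sum\pm c_j$ independent of the rest, dropping it costs only a factor $\ge\tfrac12$, and what remains is a sum of i.i.d.\ summands \emph{bounded by the constant} $K^{1/2-\be}$, to which the classical i.i.d.\ moderate-deviations lower bound applies at scale $cn^{\ga+1/2}$, giving $\ge\exp(-(1+o(1))\tfrac{c^2}{2\si_K^2}n^{2\ga})$ with $\si_K^2=\Ex[\eta^2\One_{L\le K}]/\Ex[L]\uparrow1$; the boundary pieces and the discarded excursions exceed $\tfrac14cn^{\ga+1/2}$ only with probability $\exp(-\omega(n^{2\ga}))$, using the footprint bound $n^{1-2\be}$ (when $\ga>v$) or $\theta>2\ga$ (when $\ga<u$) and Chebyshev; let $n\to\infty$, then $K\to\infty$. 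Second, a \emph{long-excursion lower bound} valid for $\ga<v$: with $\ell_0\asymp(4cn^{\ga+1/2})^{1/(1/2-\be)}$ (so $\ell_0\lesssim n^2$), the event ``some excursion of length in $[\ell_0,2\ell_0]$ starts in $[1,n-O(\sqrt{\ell_0})]$ with $+$ sign'' has probability $\gtrsim n\exp(-\ell_0^\al)=\exp(-(1+o(1))(4c)^{\rho}n^{\theta})$, forces that block to contribute more than $2cn^{\ga+1/2}$, and the rest of $S_n$ stays above $-cn^{\ga+1/2}$ with probability $1-o(1)$ (Chebyshev, $n^\ga\to\infty$); so $\liminf n^{-2\ga}\ln\Pr{S_n>cn^{\ga+1/2}}\ge-(1+o(1))(4c)^{\rho}n^{\theta-2\ga}$, which tends to $0$ when $\ga>u$.

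\emph{Upper bound and assembly.} The key remaining estimate is the \emph{Gaussian upper bound} $\limsup n^{-2\ga}\ln\Pr{S_n>cn^{\ga+1/2}}\le-\tfrac12c^2$ for $\ga\in G_1=(0,u)\cup(v,1/2)$. Fix $\eps\in(0,1)$ and split $S_n=S_n^Y+S_n^Z$ according as the excursion occupying a site has length $\le M_n$ or $>M_n$, with $M_n\to\infty$ and $M_n^{1/2-\be}=o(n^{1/2-\ga})$, so that $\Pr{S_n>cn^{\ga+1/2}}\le\Pr{S_n^Y>(1-\eps)cn^{\ga+1/2}}+\Pr{S_n^Z>\eps cn^{\ga+1/2}}$. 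The first term, by a Bernstein-type bound for the bounded regenerative sum $S_n^Y$ (block sums $\le M_n^{1/2-\be}$, variance $(1+o(1))n$, a maximal inequality for the random block count) and $M_n^{1/2-\be}\,cn^{\ga+1/2}=o(n)$, is $\le\exp(-(1-o(1))\tfrac12(1-\eps)^2c^2n^{2\ga})$. The second term I would handle by a union bound over configurations of long excursions meeting $[1,n]$: a configuration with lengths $\ell_1,\dots,\ell_k$ has probability at most $n^k\prod_i\Pr{L\ge\ell_i}=\exp(-(1-o(1))\sum_i\ell_i^\al+O(k\log n))$ and achieves $S_n^Z>\eps cn^{\ga+1/2}$ only if $\sum_i\min(\lfloor\sqrt{\ell_i}\rfloor,n)\ell_i^{-\be}>\eps cn^{\ga+1/2}$ while $\sum_i\ell_i\lesssim n$ (the footprint budget), and minimising $\sum_i\ell_i^\al$ under these two constraints forces an exponent strictly above $2\ga$ --- a single excursion, exponent $\theta>2\ga$, when $\ga<u$; a family of $\asymp n^{\kappa}$ equally long excursions tiling $[1,n]$, with $\kappa=(\ga+\be)/(\tfrac12+\be)$, when $\ga>v$, the strict inequality reducing once more to $\al+2\be<\tfrac12$ --- while the entropy factor $n^k$ is swamped because $k\log n=o(n^{2\ga})$ at the optimum; hence $\Pr{S_n^Z>\eps cn^{\ga+1/2}}=\exp(-\omega(n^{2\ga}))$, and letting $\eps\downarrow0$ finishes. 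Combining the two lower bounds and this upper bound gives, for $\ga\in G_1$, the limit $-\tfrac12c^2$, and for $\ga\in G_0=(u,v)$ the trivial $\limsup\le0$ together with the long-excursion bound gives the limit $0$; the remaining assertions (\eqref{***}, \eqref{****}, boundedness and symmetry, and $\Ex(X_1X_n)=O(\exp(-n^\eps))$ --- indeed with $\eps=\al$, since $0\le\Ex(X_0X_n)\le\Pr{\text{some excursion covers }0\text{ and }n}\le\exp(-(1+o(1))n^\al)$) follow along the way or by easier versions of these estimates.

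I expect the decisive obstacle to be the configuration/union bound for $S_n^Z$ in the upper-bound step, and in particular the optimisation of $\sum_i\ell_i^\al$ against $\sum_i\min(\lfloor\sqrt{\ell_i}\rfloor,n)\ell_i^{-\be}\ge\eps cn^{\ga+1/2}$ together with the footprint budget $\sum_i\ell_i\lesssim n$ in the range $\ga\in(v,1/2)$: it must be carried out carefully --- handling the overhang of boundary excursions sticking out of $[1,n]$, checking that the entropy $\exp(k\log n)$ is absorbed at the optimal $k$, and carrying enough of the polynomial corrections in $\Pr{L\ge\ell}$ --- and it is precisely this computation that pins the right-hand threshold $v$ and shows that no single-jump shortcut survives beyond it. A secondary, more routine, difficulty is extracting the \emph{sharp} constant $\tfrac12c^2$: one needs the classical moderate-deviations bounds for (triangular arrays of) bounded independent summands, and the passages between $S_n$ and the i.i.d.\ block-sum array must cost only a factor $1+o(1)$ --- which is where the symmetry of $(X_t)$ and the footprint bound $n^{1-2\be}$ are used to discard the boundary terms and the long-excursion remainders.
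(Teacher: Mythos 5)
Your heuristic picture is exactly right and the two thresholds $u,v$ are derived correctly, but the proof strategy diverges from the paper's in a way that leaves a structural gap at the crucial step, not just a ``routine'' one. The paper does \emph{not} split by excursion length. It splits by whether the excursion is \emph{complete} (wholly inside $[1,n]$) or a \emph{boundary} (incomplete) excursion: $S_n = S'_n + \ti S_n + S''_n$. The bulk $\ti S_n$ is then handled wholesale by the renewal--reward MDP theorem of \cite{I} (Theorem~\ref{2.7}), whose hypothesis $\Ex \exp(\eps X^2-\tau)<\infty$ holds here because an interior excursion of length $\tau$ contributes at most $\tau^{1/2-\be}$ while occupying $\tau$ sites, so $X^2\lesssim\tau$. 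This is precisely the ``time budget'' mechanism that suppresses long interior excursions, and the citation lets the paper skip the configuration/union bound you identify as the hard step. Only $S'_n+S''_n$, the at most two boundary excursions, need a bespoke analysis (Lemma~\ref{2.4}); this is short because a boundary excursion of length $\ell\gg n$ can overhang $[1,n]$ while occupying at most $\min(\sqrt\ell,n)$ active sites inside it, and this is what makes the thresholds $u,v$ appear.

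The gap in your union bound: you impose both $\sum_i\ell_i\lesssim n$ (the footprint budget) and the requirement $\sum_i\min(\sqrt{\ell_i},n)\ell_i^{-\be}>\eps c n^{\ga+1/2}$ on a configuration of long excursions, and then optimise. But the ``single excursion'' minimiser has $\ell\asymp n^{(\ga+1/2)/(1/2-\be)}$, and since $(\ga+1/2)/(1/2-\be)>1$ for every $\ga>0$, this $\ell$ always exceeds $n$ --- it is a \emph{boundary} excursion overhanging $[1,n]$, which your footprint constraint $\sum_i\ell_i\lesssim n$ forbids. For interior excursions the constraint is correct and indeed makes a single long one useless (a complete interior excursion has $\ell\le n$ so contributes $\le n^{1/2-\be}<n^{\ga+1/2}$); for boundary excursions the correct constraint is only that the \emph{inside} portion is $\le n$, while $\ell$ itself can be as large as $\approx n^2$. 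As written, your upper bound for $S_n^Z$ would either omit the dominant configuration in the regime $\ga<u$ (and thus fail to bound it), or, if you read the constraint loosely, not actually rule out the configurations you claim it does. Fixing this requires treating the (at most two) boundary excursions as a separate object from the interior ones, with a different budget --- and that separation is exactly the paper's $S'_n+\ti S_n+S''_n$ decomposition, performed from the start rather than discovered mid-optimisation. A secondary issue: you assert the tiling exponent exceeds $2\ga$ ``reducing once more to $\al+2\be<1/2$,'' but the computation gives $(\ga+\be+\al(1/2-\ga))/(1/2+\be)>2\ga\Leftrightarrow\ga<1/2$, i.e.\ it holds unconditionally; the hypothesis $\al+2\be<1/2$ enters only through $u<v$ and through $\rho=\al/(1/2-\be)<1$, not here. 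So the proposal is a correct heuristic and a promising self-contained plan (which, if carried out, would avoid the reliance on~\cite{I}), but the central union-bound step is set up with a constraint that the dominant configuration violates, and repairing it essentially reproduces the boundary/interior split the paper starts from.
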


Remark: by \eqref{1.2}, $ 0 < u < \al < v \le 0.5 $.

Remark: arbitrary $ u,v $ satisfying $ 0 < u < v \le 0.5 $ are of the
form \eqref{1.7} for some $ \al, \be $ satisfying \eqref{1.2}. Proof:
take $ \be = 0.25(1-2v) $ and $ \al = \frac{1+2v}{1+2u}u $, then $ 0.5
- \al - 2\be = \frac{v-u}{1+2u} > 0 $.

\section[Examining the process]
  {\raggedright Examining the process}
\label{sect2}
The sum
\[
S_n = X_1 + \dots + X_n
\]
may be treated as the sum over excursions (from the origin to the
origin) of the process $ (A_t,B_t)_t $; no ambiguity appears, since $
\phi(0,0)=0 $.

We introduce $ \ti S_n = X_{1+B_1} + \dots + X_{n-A_n} $ (the sum over
complete excursions within $ [1,n] $), $ S'_n = X_1+\dots+X_{1+B_1} $
and $ S''_n = X_{n-A_n}+\dots+X_n $ (the contributions of the two
incomplete excursions).
\[
\begin{gathered}\includegraphics{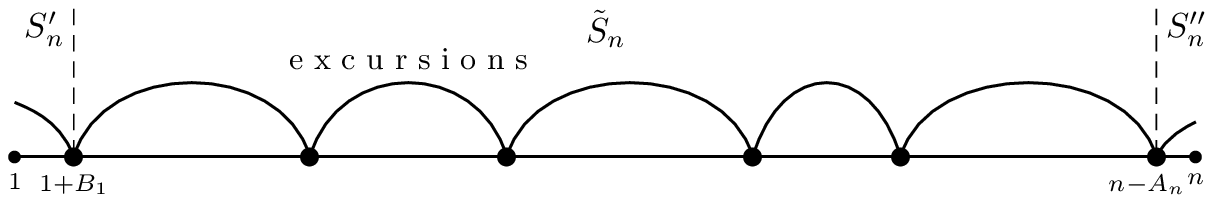}\end{gathered}
\]
It may happen that $ (A_t,B_t) = (0,0) $ only
once on $ [1,n] $; then $ \ti S_n = 0 $. It may also happen that $
(A_t,B_t) \ne (0,0) $ for all $ t \in [1,n] $; in this case we take $
S'_n = 0 $, $ \ti S_n = 0 $, $ S''_n = S_n $. In every case,
\begin{equation}\label{2.1}
S_n = S'_n + \ti S_n + S''_n \, .
\end{equation}
Conditionally, given $ (A_k,B_k)_k $, the summands $ S'_n, \ti S_n,
S''_n $ are independent and symmetrically distributed (due to the
``$\pm$'' in \eqref{1.1}), thus, for every $ c $,
\begin{equation}\label{2.2}
\Pr{ S_n > c } \ge \frac12 \max \Big( \Pr{ S'_n+S''_n>c }, \Pr{ \ti
S_n)>c } \Big) \, .
\end{equation}
It appears that $ \ti S_n $ satisfy MDP for all $ \ga $
(Lemma \ref{2.3}), while $ S'_n + S''_n $ is too large for $ \ga \in
(u,v) $ but small for $ \ga \in (0,u) \cup (v,0.5) $
(Lemma \ref{2.4}).

\begin{lemma}\label{2.3}
There exists $ \si \in (0,\infty) $ such that
\[
\ln \PR{ \frac{\ti S_n}{\si \sqrt n} > u_n } \sim -\frac12 u_n^2
\quad \text{as } n \to \infty
\]
whenever $ u_n \to \infty $ and $ n^{-1/2} u_n \to 0 $.
\end{lemma}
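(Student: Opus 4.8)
The plan is to show that $\ti S_n$, the sum over complete excursions, behaves like a sum of i.i.d.\ normal-ish increments by combining a "typical number of excursions" estimate with a Gaussian-type tail bound on the contribution of each excursion, then invoking a standard one-dimensional MDP. First I would record the structure: conditionally on the renewal configuration $(A_k,B_k)_k$, $\ti S_n$ is a sum, over the complete excursions $E_1,\dots,E_{N}$ lying inside $[1,n]$, of independent symmetric random variables $Y_j = \pm \sum_{t\in E_j}\phi(A_t,B_t)$. For an excursion of length $m$ (i.e.\ with $A_t+B_t=m$), the within-excursion sum is $\sum_{k=1}^{m-1}\phi(k,m-k) = \sum_{k:\,k^2\le m}(m)^{-\be} = \lfloor\sqrt m\rfloor\, m^{-\be}$, so $|Y_j|\asymp m^{1/2-\be}$ and $Y_j^2 \asymp m^{1-2\be}$. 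Thus $\Ex(Y_j^2\mid \text{length}=m)\asymp m^{1-2\be}$, and since the tail \eqref{1.3} makes $\mu_m\approx\exp(-m^\al)-\exp(-(m+1)^\al)$ decay faster than any power, the length-size-biased distribution of a typical excursion covering a fixed time $t$ still has all moments finite; in particular $\si^2 := \Ex(X_1^2) = \sum_m (\text{length-}m\text{ excursion prob at }0)\cdot m^{-2\be}\cdot\One\{\sqrt m\le\dots\}\in(0,\infty)$ exists and is finite, and likewise the per-excursion second moment $\kappa^2:=\Ex Y_j^2<\infty$ and all higher moments of $Y_j$ are finite.

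Next I would pin down $N$, the number of complete excursions in $[1,n]$. By the renewal theorem (finite mean renewal time, guaranteed by \eqref{1.2}--\eqref{1.3} since $\sum k\mu_k<\infty$), $N/n \to 1/\Ex L =: \rho > 0$ a.s.\ and, more to the point, $N$ concentrates around $\rho n$ with fluctuations of order $\sqrt n$ and super-polynomially small probability of deviating by a constant fraction of $n$ — again using that renewal times have light tails. Conditionally on $N=N_0$, the sum $\ti S_n=\sum_{j=1}^{N_0}Y_j$ is a sum of $N_0$ i.i.d.\ centered symmetric bounded-moment summands with variance $\kappa^2$. The classical MDP for triangular arrays of i.i.d.\ variables with enough finite exponential moments (or just Cramér-type moderate deviations, e.g.\ the Linnik/Rubin--Sethuraman estimates, valid here because one can check a Bernstein-type bound $\Ex \E^{\la Y_j}\le \E^{C\la^2}$ for all $\la$ — the $Y_j$ are not bounded, but $\PR{|Y_j|>y}$ decays faster than any $\E^{-y^{2+\de}}$ thanks to the light excursion-length tail, which suffices) gives
\[
\ln \PR{ \sum_{j=1}^{N_0} Y_j > \kappa\sqrt{N_0}\,w_{N_0} } \sim -\tfrac12 w_{N_0}^2
\]
whenever $w_{N_0}\to\infty$, $w_{N_0}/\sqrt{N_0}\to 0$. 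Setting $\si = \kappa\sqrt\rho$ and translating $u_n$ through $N_0\approx\rho n$, this is exactly the claimed asymptotic for the conditioned sum.

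Finally I would remove the conditioning on $N$. The event $\{\ti S_n > \si\sqrt n\, u_n\}$ is handled by splitting on whether $N$ lies in the $\sqrt n\log n$-window around $\rho n$ or not; outside that window contributes probability $\exp(-n^{c})$ for some $c>0$, which is negligible against $\exp(-\tfrac12 u_n^2)$ since $u_n = o(\sqrt n)$; inside the window, the variation of $\tfrac12 w_{N_0}^2$ across admissible $N_0$ is $o(u_n^2)$ because $u_n\to\infty$ forces $u_n^2$ to dominate the $O(u_n\cdot \log n/\sqrt n)$ oscillation, giving the upper and lower matching bounds after a routine union-bound / single-term argument. The main obstacle, and where I would spend the most care, is the tail control of a single excursion's contribution $Y_j$: one must verify that \eqref{1.4}'s truncation $k^2\le k+l$ together with \eqref{1.3}'s stretched-exponential renewal tail yields fast enough decay of $\PR{|Y_j|>y}$ to license the i.i.d.\ MDP uniformly in the moderate range (this is where the constraint $\al+2\be<1/2$ and the $k^2\le k+l$ cutoff are doing real work), and that the size-biasing inherent in "the excursion straddling time $t$" does not destroy these moment bounds. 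Everything else is renewal-theoretic bookkeeping plus the classical one-dimensional moderate deviations theorem.
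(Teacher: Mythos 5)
Your approach is genuinely different from the paper's---the paper simply verifies the hypotheses \eqref{2.5}--\eqref{2.6} of a ready-made renewal--reward MDP (Theorem~\ref{2.7}, cited from~\cite{I}) and then passes from the ordinary to the delayed process via Lemma~\ref{2.9}; you instead try to re-derive the renewal--reward MDP from scratch by conditioning on the number $N$ of complete excursions and invoking a classical i.i.d.\ MDP. Unfortunately there is a fatal error in the moment/tail estimate that makes your route impossible here, and it is instructive because it explains why the paper invokes a theorem whose hypothesis is the \emph{joint} condition $\Ex\exp(\eps X^2-\tau)<\infty$ rather than a moment condition on $X$ alone.

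Concretely: you assert that $\Pr{|Y_j|>y}$ decays faster than $\E^{-y^{2+\de}}$, so that Bernstein/Linnik-type i.i.d.\ moderate deviations apply over the full range $u_n=o(\sqrt n)$. But with $|Y_j|\asymp \tau^{1/2-\be}$ and $\ln\Pr{\tau>k}\sim -k^\al$ one gets $\Pr{|Y_j|>y}\approx\exp\(-y^\theta\)$ with $\theta=\al/(1/2-\be)$, and the standing hypothesis $\al+2\be<1/2$ forces $\theta<1$. So the $Y_j$ have a \emph{sub}exponential (stretched-exponential with exponent $<1$) tail, far heavier than sub-Gaussian, and even the Cram\'er condition $\Ex\E^{\la Y_j}<\infty$ fails. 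For i.i.d.\ summands with such tails, Nagaev-type results show the normal approximation $\ln\Pr{\sum_{j\le N}Y_j>x}\sim -x^2/(2N\kappa^2)$ holds only up to the crossover $x\asymp N^{1/(2-\theta)}$; beyond that, the single-big-jump term $N\Pr{Y>x}\approx N\E^{-x^\theta}$ dominates. Translating via $N\approx\rho n$, $x=\si\sqrt n\,u_n$, the crossover is at $u_n\asymp n^{\theta/(2(2-\theta))}$, and a short computation gives $\theta/(2(2-\theta))=\al/(2(1-\al-2\be))=u$. So your i.i.d.\ argument would only establish the MDP for $\ga<u$ --- yet Lemma~\ref{2.3} (and it must) asserts it for every $\ga\in(0,1/2)$, because this is exactly what is subtracted against $S'_n+S''_n$ in the regime $\ga\in(u,v)$.

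The reason Lemma~\ref{2.3} is nonetheless true over the full range is that $\ti S_n$ sums only over excursions entirely inside $[1,n]$: a single excursion large enough to produce the big jump $|Y|\gtrsim\sqrt n\,u_n$ (with $u_n\gg n^u$) has length $\gtrsim n^{2\ga/\theta\cdot\theta}=\dots$, in any case long enough that with overwhelming probability it straddles $1$ or $n$ and so contributes to $S'_n$ or $S''_n$, not to $\ti S_n$. Condition~\eqref{2.6}, $\Ex\exp(\eps X^2-\tau)<\infty$, encodes precisely this coupling: $X^2\approx\tau^{1-2\be}\le\tau$, so the cost of the jump is offset by the time the excursion consumes. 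Decoupling $Y$ from $\tau$, as your conditioning does, destroys this cancellation. (There is also a secondary error: given $N=N_0$, the excursion lengths $L_1,\dots,L_{N_0}$ are constrained by $L_1+\dots+L_{N_0}\le n-T_1$ and so are \emph{not} conditionally i.i.d.\ --- this too is why the renewal--reward MDP is not a trivial corollary of the i.i.d.\ one --- but the tail miscalculation is the more serious problem.)
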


\begin{lemma}\label{2.4}
For all $ \ga \in (0,u) \cup (u,v) \cup (v,0.5) $ and all $ c > 0 $,
\[
\lim_{n\to\infty} \frac1{n^{2\ga}} \ln \PR{ \frac{S'_n+S''_n}{\sqrt
n} > c n^\ga } = \begin{cases}
 -\infty &\text{if } \ga \in (0,u) \cup (v,0.5), \\
 0 &\text{if } \ga \in (u,v).
\end{cases}
\]
\end{lemma}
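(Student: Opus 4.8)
The key point is that $S'_n$ and $S''_n$ are each a single incomplete-excursion contribution, and the only dangerous thing that can happen is a single very long renewal interval straddling the endpoint $1$ (for $S'_n$) or $n$ (for $S''_n$). So the plan is to reduce everything to the distribution of the renewal interval length $L = A_t + B_t$ at a fixed time $t$, whose tail is governed by \eqref{1.3}: $\Pr{L \ge n} = \exp(-n^\al)$ up to the obvious combinatorial factor. First I would note that, conditionally on the renewal structure, $S'_n$ is a symmetric sum of at most $B_1 + 1 \le L$ terms, each of modulus $\phi(A_t,B_t) \le (A_t+B_t)^{-\be}$ on the support of $\phi$, and $\phi$ vanishes unless $k^2 \le k+l$, i.e.\ unless the "age" coordinate $k$ is at most $\sqrt{k+l} \le \sqrt L$. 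Hence on an excursion of length $L$ the nonzero summands number at most $\sqrt L$, each of size at most $L^{-\be}$, so $|S'_n| \le \sqrt L \cdot L^{-\be} = L^{1/2-\be}$ deterministically (given the excursion), and likewise $|S''_n| \le L^{1/2-\be}$ with $L$ the straddling interval at $n$. Therefore $\{S'_n + S''_n > c n^\ga\}$ forces one of the two straddling intervals to have length $L$ with $L^{1/2-\be} \gtrsim c n^\ga / 2$, i.e.\ $L \gtrsim n^{\ga/(1/2-\be)}$ up to constants.

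Next I would compute the probability of such a long straddling interval. In the stationary renewal process, the interval covering a fixed time has the size-biased length distribution, so $\Pr{L \ge m} \asymp \sum_{k \ge m} k\mu_k$, and by \eqref{1.3} this is $\exp(-(m-1)^\al)$ up to polynomial corrections; taking logs, $\ln \Pr{L \ge m} \sim -m^\al$. Plugging in $m \asymp n^{\ga/(1/2-\be)}$ gives the upper bound
\[
\ln \Pr{ S'_n + S''_n > c n^\ga } \le -\const \cdot n^{\al\ga/(1/2-\be)} + o(\text{same}),
\]
and dividing by $n^{2\ga}$ the sign of the exponent is decided by comparing $\al\ga/(1/2-\be)$ with $2\ga$, i.e.\ comparing $\al/(1/2-\be)$ with $2$: the former is smaller precisely when $\al + 2\be < 1/2$, which is \eqref{1.2}. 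So for \emph{all} $\ga$ this upper bound alone gives $-\infty$; the genuine content is the matching lower bound for $\ga \in (u,v)$, showing the limit is $0$ and not $-\infty$ there.

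For the lower bound on $(u,v)$ I would exhibit an explicit favorable scenario and use \eqref{2.2}-style conditioning in reverse: demand that the excursion straddling time $n$ (say) have a specific length $m = m(n)$ and a specific shape making many $\phi$-values nonzero and comparable, so that conditionally $S''_n$ is a symmetric sum of $\asymp \sqrt m$ terms each of size $\asymp m^{-\be}$; by a crude anti-concentration estimate (e.g.\ $\Pr{\text{symmetric sum of }N\text{ iid signs}\cdot a > Na/\sqrt N\text{-ish}} \ge \const$, or even just $\Pr{\text{all signs}=+1} = 2^{-N}$, which costs $\exp(-\const\sqrt m)$ — negligible against $n^{2\ga}$ when $\ga < v$) one gets $S''_n \ge \const \cdot m^{1/2-\be}$ with probability $\exp(-m^\al + o(m^\al)) \cdot \exp(-o(n^{2\ga}))$. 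Choosing $m$ so that $m^{1/2-\be} = 2c n^\ga$, i.e.\ $m \asymp n^{\ga/(1/2-\be)}$, makes the event $\{S''_n > cn^\ga\} \subset \{S'_n+S''_n > cn^\ga\}$ (absorb $S'_n$ by also forcing, say, $S'_n \ge 0$, which by symmetry costs only a factor $1/2$), and its log-probability is $\asymp -n^{\al\ga/(1/2-\be)} = -n^{2u\ga/(\ldots)}$; the precise bookkeeping with $u = \al/(2(1-\al-2\be))$ is arranged so that this exponent is $o(n^{2\ga})$ exactly when $\ga > u$. Combined with $\ga < v = 1/2-2\be$ ensuring the anti-concentration cost $\exp(-\const m^{1/2})$ is also $o(n^{2\ga})$ (since $m^{1/2} \asymp n^{\ga/(1-2\be)}$ and $\ga/(1-2\be) < 2\ga \iff \ga > \ldots$, which holds on the relevant range), we get $\liminf \frac{1}{n^{2\ga}}\ln\Pr{\cdot} \ge 0$, hence $=0$. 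The main obstacle, and the step deserving the most care, is this lower bound: one must check that the two competing costs — the renewal cost $n^{\al\ga/(1/2-\be)}$ and the sign/anti-concentration cost $n^{\ga/(1-2\be)}$ — are \emph{both} of smaller order than $n^{2\ga}$ simultaneously on the open interval $(u,v)$, and that the definitions \eqref{1.7} of $u$ and $v$ are precisely the thresholds where these two costs respectively cross $n^{2\ga}$. Everything else (the combinatorial bound on the number of nonzero $\phi$'s per excursion, the size-biasing of the stationary renewal interval, the reduction via \eqref{2.1}) is routine.
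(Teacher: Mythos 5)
Your high-level picture --- a single long renewal interval straddling an endpoint is the only way for $S'_n+S''_n$ to be large, and the thresholds $u,v$ encode exactly when such an interval is respectively cheap enough and possible --- is the right picture, and it is essentially the paper's. But the execution has two substantive errors that would make the argument fail, plus a consequent sign of trouble you noticed but did not resolve.

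First, you dropped the normalization $\sqrt n$. The event in the lemma is $S'_n+S''_n > c\,n^{\ga}\sqrt n = c\,n^{\ga+1/2}$, so the forced interval length satisfies $L^{1/2-\be}\gtrsim n^{\ga+1/2}$, giving $L\gtrsim n^{(\ga+1/2)/(1/2-\be)}$ and a renewal cost $\exp(-m^\al)$ with exponent $m^\al \asymp n^{\al(\ga+1/2)/(1/2-\be)}$. Comparing $\al(\ga+1/2)/(1/2-\be)$ with $2\ga$ gives exactly the threshold $\ga = u = \al/(2(1-\al-2\be))$. With your $m\asymp n^{\ga/(1/2-\be)}$ the exponent becomes $\al\ga/(1/2-\be)$, which is less than $2\ga$ for \emph{every} $\ga$ (since $\al<1-2\be$ by \eqref{1.2}), and then the upper bound gives nothing (it tends to $0$, not $-\infty$; you wrote the opposite). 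This is also why your claim ``for all $\ga$ this upper bound gives $-\infty$'' is internally inconsistent with the lower bound you then want to prove on $(u,v)$: that was a warning sign that something was off.

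Second, and more fundamentally, you misread the model \eqref{1.1}. The random sign is drawn \emph{once per excursion}, not per time step. So conditionally on $(A_n,B_n)$ the quantity $S''_n$ is a single random sign times a \emph{deterministic} number $\approx (A_n+B_n)^{-\be}\bigl(\min(A_n,\sqrt{A_n+B_n})-(A_n-n)^+\bigr)^+$ (the paper's \eqref{2.14}); there is no anti-concentration problem and no $\exp(-\const\sqrt m)$ cost. If the signs really were independent per term, your lower bound would not work: with the correct $m\asymp n^{(\ga+1/2)/(1/2-\be)}$, the cost $\sqrt m \asymp n^{(\ga+1/2)/(1-2\be)}$ exceeds $n^{2\ga}$ throughout $(0,1/2)$, so the spurious cost would dominate and force a strictly negative limit on $(u,v)$. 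The construction hinges on the per-excursion sign precisely to avoid this.

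A related consequence: you attribute the threshold $v$ to the (nonexistent) anti-concentration cost. In fact $v=1/2-2\be$ arises for two clean reasons. For the lower bound, the favorable excursion must have its short ``young'' segment (where $\phi\neq 0$, i.e.\ $A_t\le\sqrt{L}$) actually land inside $[1,n]$, which forces $\sqrt L\lesssim n$, i.e.\ $L\lesssim n^2$; combined with $L\gtrsim n^{(\ga+1/2)/(1/2-\be)}$ this is consistent iff $\ga<v$. For the upper bound on $(v,0.5)$, the paper uses the deterministic bounds $|S'_n|\le n^{1-2\be}$ and $|S''_n|\le n^{1-2\be}$ (from $x^{-\be}\min(n,\sqrt x)\le n^{1-2\be}$), so the probability is exactly zero for large $n$; your renewal-tail estimate does not address this regime at all. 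So you should: restore the $\sqrt n$, use the per-excursion sign to make $|S''_n|$ deterministic given $(A_n,B_n)$, derive $v$ from the geometric constraint $A_n<n$ together with $\sqrt{L}<A_n$, and handle $(v,0.5)$ by the deterministic bound rather than by a tail estimate.
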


We first deduce Theorem \ref{1.5} from these lemmas proved afterwards.

\begin{proof}[Proof of Theorem \ref{1.5}]
For $ \ga \in (u,v) $ we have by \eqref{2.2} and Lemma \ref{2.4}
\[
\liminf_{n\to\infty} \frac1{n^{2\ga}}
\ln \PR{ \frac{S_n}{\sqrt n} > c n^\ga } \ge
\liminf_{n\to\infty} \frac1{n^{2\ga}}
\ln \PR{ \frac{S'_n+S''_n}{\sqrt n} > c n^\ga } = 0 \, .
\]
Let $ \ga \in (0,u) \cup (v,0.5) $ and $ c>0 $. On one hand,
by \eqref{2.2} (again) and Lemma \ref{2.3},
\[
\liminf_{n\to\infty} \frac1{n^{2\ga}}
\ln \PR{ \frac{S_n}{\si\sqrt n} > c n^\ga } \ge
\liminf_{n\to\infty} \frac1{n^{2\ga}}
\ln \PR{ \frac{\ti S_n}{\si\sqrt n} > c n^\ga } = -\frac12 c^2 \, .
\]
On the other hand, for every $ \eps>0 $,
\[
\PR{ \frac{S_n}{\si\sqrt n} > c n^\ga } \le
\PR{ \frac{\ti S_n}{\si\sqrt n} > (c-\eps) n^\ga } +
\PR{ \frac{S'_n+S''_n}{\si\sqrt n} > \eps n^\ga } \, ;
\]
for $ n $ large enough we have
\[
\PR{ \frac{\ti S_n}{\si\sqrt n} > (c-\eps) n^\ga } \le \exp \Big(
n^{2\ga} \( -0.5 (c-\eps)^2 + \eps \) \Big)
\]
by Lemma \ref{2.3}; a similar (and even much stronger) bound for
$ \Pr{ \frac{S'_n+S''_n}{\si\sqrt n} > \eps n^\ga } $ holds by
Lemma \ref{2.4}; thus,
\[
\limsup_{n\to\infty} \frac1{n^{2\ga}}
\ln \PR{ \frac{S_n}{\si\sqrt n} > c n^\ga } \le
-\frac12 (c-\eps)^2 + \eps
\]
for all $ \eps>0 $.
\end{proof}

The process $ (\ti S_n)_{n=1,2,\dots} $ is a so-called renewal-reward
process; it jumps at renewal times, and each jump size depends only on
the corresponding (just finished) renewal interval. This process is
delayed (that is, not necessarily starts at a renewal time); a
time-shifted process $ (S_{1+B_1+n})_{n=0,1,\dots} $ is an ordinary
(that is, not delayed) renewal-reward process. MDP for such processes
are available \cite{I} under the conditions
\begin{gather}
\Ex \tau < \infty \, , \label{2.5} \\
\Ex \exp ( \eps X^2 - \tau ) < \infty \quad \text{for some } \eps >
0 \, ; \label{2.6}
\end{gather}
here $ \tau $ is a renewal interval, and $ X $ is the corresponding
jump size. (The renewal-reward process is formed by a sequence of
independent copies of the pair $ (\tau,X) $.)

Here is a result for ordinary renewal-reward processes $
\( S(t) \)_{t\in[0,\infty)} $, formulated in \cite{I} only for $ \Ex
X^2 = 1 $, $ \Ex \tau = 1 $; the general case follows easily by
rescaling.

\begin{theorem}\label{2.7}
\textup{(\cite{I})}
If \eqref{2.5}, \eqref{2.6} are satisfied and $ \Ex X = 0 $, $ \Ex X^2
\ne 0 $ then $ \Ex X^2 < \infty $ and
\[
\lim_{x\to\infty,x/\sqrt t\to0} \frac1{x^2} \ln \Pr{ \si^{-1} S(t) >
x\sqrt t } = -\frac12
\]
where
\[
\si = \sqrt{ \frac{ \Ex X^2 }{ \Ex \tau } } \, .
\]
\end{theorem}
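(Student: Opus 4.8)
\noindent\emph{Proof proposal.} The plan is to read off the whole statement from the normalized case $ \Ex X^2=1 $, $ \Ex\tau=1 $ of \cite{I} by rescaling; only two things really need checking, namely $ \Ex X^2<\infty $, and that the rescaled pair $ (\tau,X) $ still meets the hypotheses of \cite{I}.

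For finiteness I would split $ \Ex X^2=\Ex\(X^2\One_{\eps X^2\le2\tau}\)+\Ex\(X^2\One_{\eps X^2>2\tau}\) $. On the first event $ X^2\le2\tau/\eps $, so that summand is $ \le(2/\eps)\Ex\tau<\infty $ by \eqref{2.5}. On the second event $ \tau<\eps X^2/2 $, hence $ \eps X^2-\tau>\eps X^2/2 $, and since $ \E^u\ge u $ for $ u\ge0 $ we get $ X^2\le(2/\eps)\E^{\eps X^2/2}<(2/\eps)\E^{\eps X^2-\tau} $ there, so that summand is $ \le(2/\eps)\Ex\E^{\eps X^2-\tau}<\infty $ by \eqref{2.6}. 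Combined with $ \Ex X^2\ne0 $ this yields $ 0<\Ex X^2<\infty $, so $ \si=\sqrt{\Ex X^2/\Ex\tau}\in(0,\infty) $.

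Next, put $ a=\Ex\tau $, $ b=\Ex X^2 $. The process $ S $ is formed from i.i.d.\ copies $ (\tau_i,X_i) $ of $ (\tau,X) $; let $ S' $ be the renewal-reward process formed from $ (\tau_i/a,\;X_i/\sqrt b) $. Dividing all interarrival times by $ a $ is a deterministic time change, and the rewards are piecewise constant, so $ S'(s)=b^{-1/2}S(as) $ for every $ s\ge0 $. The pair $ (\tau/a,X/\sqrt b) $ has interval mean $ 1 $, reward mean $ 0 $, reward second moment $ 1 $; the only hypothesis of \cite{I} left to verify is \eqref{2.6}, i.e.\ $ \Ex\exp(\eps' b^{-1}X^2-a^{-1}\tau)<\infty $ for some $ \eps'>0 $. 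If $ a\le1 $ then $ a^{-1}\tau\ge\tau $, so $ \eps'=b\eps $ works at once by \eqref{2.6}; if $ a>1 $, Jensen's inequality for the concave map $ u\mapsto u^{1/a} $ applied to $ \exp(\eps X^2-\tau) $ gives $ \Ex\exp\(a^{-1}(\eps X^2-\tau)\)<\infty $, i.e.\ $ \eps'=b\eps/a $ works.

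Finally, \cite{I} applied to $ S' $ says $ \frac1{x^2}\ln\Pr{S'(s)>x\sqrt s}\to-1/2 $ as $ x\to\infty $, $ x/\sqrt s\to0 $. Putting $ t=as $ (with $ a $ a fixed positive constant, so the constraint $ x/\sqrt s\to0 $ is the same as $ x/\sqrt t\to0 $, and the prefactor $ x^{-2} $ is unchanged), and using $ S'(s)=b^{-1/2}S(as) $ together with $ \sqrt{b/a}=\si $, we get $ \Pr{S'(s)>x\sqrt s}=\Pr{S(t)>\si x\sqrt t}=\Pr{\si^{-1}S(t)>x\sqrt t} $, which is exactly the asserted limit. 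I expect the only mildly delicate point to be that the $ X^2 $-term and the $ \tau $-term in \eqref{2.6} scale by different factors under the rescaling, so preservation of \eqref{2.6} is not entirely automatic; the Jensen step above takes care of it, and everything else is bookkeeping.
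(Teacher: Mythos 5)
Your proposal is correct and follows exactly the route the paper indicates: the paper simply cites \cite{I} for the normalized case $\Ex X^2=1$, $\Ex\tau=1$ and remarks that ``the general case follows easily by rescaling,'' without writing out the rescaling. You have filled in precisely that gap, and all three pieces check out: the split of $\Ex X^2$ on $\{\eps X^2\le 2\tau\}$ versus its complement, together with $\E^u\ge u$, cleanly yields $\Ex X^2<\infty$ from \eqref{2.5}--\eqref{2.6}; the identity $S'(s)=b^{-1/2}S(as)$ for the time-and-reward rescaled process is right; and your observation that \eqref{2.6} is not automatically preserved (because $X^2$ and $\tau$ rescale by different factors) is a genuine point that the paper glosses over, handled correctly via monotonicity when $a\le1$ and Jensen for the concave power $u\mapsto u^{1/a}$ when $a>1$. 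The only cosmetic remark is that after dividing by $a=\Ex\tau$ the renewal intervals may cease to be lattice; this is harmless because, as the paper notes, the cited theorem covers both the lattice and nonlattice cases.
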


Lattice and nonlattice cases are both covered by Theorem \ref{2.7},
but we need only the lattice case: $ \tau \in \{1,2,\dots\} $ a.s.;
then
\begin{equation}\label{2.8}
\ln \Pr{ \si^{-1} S(n) > u_n \sqrt n } \sim -\frac12 u_n^2
\end{equation}
whenever $ u_n \to \infty $ and $ n^{-1/2} u_n \to 0 $. The transition
from ordinary to delayed process is made as follows.

\begin{lemma}\label{2.9}
Let random variables $ S(n) $ satisfy \eqref{2.8}, and for all $ n $,
$ S(n) \le n $ a.s. Let $ T $ be a random variable independent of $
(S(n))_n $, $ T \in \{0,1,2,\dots\} $ a.s. Then \eqref{2.8} holds also
for the random variables
\[
S'(n) = \begin{cases}
 S(n-T) &\text{for } n \ge T,\\
 0 &\text{otherwise.}
\end{cases}
\]
\end{lemma}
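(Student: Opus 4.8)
The plan is to condition on the value of $T$ and push everything back onto hypothesis \eqref{2.8} for the undelayed variables. By independence of $T$ and $(S(n))_n$, and since $S'(n)=0$ on $\{T>n\}$ while $u_n\sqrt n>0$ for $n$ large,
\[
\Pr{\si^{-1}S'(n)>u_n\sqrt n}=\sum_{m=0}^{n}\Pr{T=n-m}\,\Pr{\si^{-1}S(m)>u_n\sqrt n}\qquad(n\text{ large}).
\]
The crucial bookkeeping remark is that $S(m)\le m$ a.s.\ forces the summand to vanish unless $m>\si u_n\sqrt n$; this single truncation is the only place the boundedness hypothesis enters, and it cannot be dispensed with (otherwise a term with very small $m$, where nothing constrains $S(m)$, could carry non-negligible mass when $T$ is heavy-tailed).

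For the lower bound I would fix one $k_0$ with $\Pr{T=k_0}>0$ (such $k_0$ exists as $T$ is $\{0,1,2,\dots\}$-valued) and retain only the term $m=n-k_0$, so that
\[
\Pr{\si^{-1}S'(n)>u_n\sqrt n}\ge\Pr{T=k_0}\,\Pr{\si^{-1}S(n-k_0)>u_n\sqrt n}\qquad(n\ge k_0).
\]
Writing $u_n\sqrt n=v_n\sqrt{n-k_0}$ with $v_n=u_n\sqrt{n/(n-k_0)}$, one checks $v_n\to\infty$ and $v_n/\sqrt{n-k_0}=u_n\sqrt n/(n-k_0)\to0$, so \eqref{2.8} gives $\ln\Pr{\si^{-1}S(n-k_0)>u_n\sqrt n}\sim-\tfrac12v_n^2\sim-\tfrac12u_n^2$; since $\ln\Pr{T=k_0}$ is a fixed constant this yields $\liminf_n\frac1{u_n^2}\ln\Pr{\si^{-1}S'(n)>u_n\sqrt n}\ge-\tfrac12$.

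For the upper bound the key observation is that every $m$ appearing in the sum has $m\le n$, hence $u_n\sqrt n\ge u_n\sqrt m$ and so $\Pr{\si^{-1}S(m)>u_n\sqrt n}\le\Pr{\si^{-1}S(m)>u_n\sqrt m}$. On the truncated range $\si u_n\sqrt n<m\le n$ the level $u_n\sqrt m$ sits well inside the moderate-deviations range of $S(m)$: indeed $u_n\to\infty$, while $u_n/\sqrt m\le u_n/\sqrt{\si u_n\sqrt n}=\si^{-1/2}(u_n/\sqrt n)^{1/2}\to0$, uniformly over such $m$. Hence the uniform form of \eqref{2.8} (which follows from its sequential form by a routine subsequence-extraction argument) gives $\Pr{\si^{-1}S(m)>u_n\sqrt m}\le\exp\bigl(-(\tfrac12-\eta_n)u_n^2\bigr)$ with $\eta_n\to0$, uniformly over $\si u_n\sqrt n<m\le n$. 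Summing and using $\sum_m\Pr{T=n-m}\le1$ we obtain $\Pr{\si^{-1}S'(n)>u_n\sqrt n}\le\exp\bigl(-(\tfrac12-\eta_n)u_n^2\bigr)$, i.e.\ $\limsup_n\frac1{u_n^2}\ln\Pr{\si^{-1}S'(n)>u_n\sqrt n}\le-\tfrac12$, which together with the previous paragraph proves \eqref{2.8} for $S'$.

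I expect the main obstacle to be the temptation to split $\{T\text{ small}\}$ from $\{T\text{ large}\}$: no tail bound on $T$ is available, so the large-$T$ (small-$m$) terms cannot be discarded via $\Pr{T\ge\cdot}$ and must instead be absorbed through the estimate on $\Pr{\si^{-1}S(m)>u_n\sqrt n}$ itself --- which succeeds only because $S(m)\le m$ pins $m$ above $\si u_n\sqrt n$, placing the level $u_n\sqrt n$ so far into the moderate-deviations range of $S(m)$ that \eqref{2.8} applies with room to spare. The only other point I would handle with care is the passage from the sequential statement \eqref{2.8} to the uniform-in-$m$ bound used above, which I would record as a short preliminary lemma.
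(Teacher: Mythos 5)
Your proof is correct and follows essentially the same route as the paper's: condition on $T$, retain a single term for the lower bound, and use $S(m)\le m$ a.s.\ to kill the small-$m$ terms in the upper bound. The one step you spell out that the paper leaves implicit is the passage from the sequential form of \eqref{2.8} to a uniform-in-$(m,\text{level})$ bound, which is indeed needed to control $\Pr{\si^{-1}S(m)>u_n\sqrt n}$ for $m$ ranging between $\si u_n\sqrt n$ and $n$.
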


\begin{proof}
For every $ \eps>0 $, for all $ n \ge n_\eps $,
\[
\exp \Big( -\frac{1+\eps}2 u_n^2 \Big) \le \Pr{ \si^{-1} S(n) >
u_n \sqrt n } \le \exp \Big( -\frac{1-\eps}2 u_n^2 \Big) \, .
\]
We choose $ t $ such that $ \Pr{ T=t } > 0 $ and get
\begin{multline*}
\Pr{ \si^{-1} S'(n) > u_n \sqrt n } \ge
 \Pr{ T=t } \Pr{ \si^{-1} S(n-t) > u_n \sqrt n } \ge \\
\ge \Pr{ T=t } \exp \Big( -\frac{1+\eps}2 \frac{n}{n-t} u_n^2 \Big)
\end{multline*}
for all $ n \ge n_\eps + t $, thus,
\[
\limsup_{n\to\infty} \Big( \frac{-2}{u_n^2} \ln \Pr{ \si^{-1} S'(n) >
u_n \sqrt n } \Big) \le 1 + \eps
\]
for all $ \eps>0 $. On the other hand,
\[
\Pr{ \si^{-1} S'(n) > u_n \sqrt n }
= \sum_{k=0}^n \Pr{T=k} \Pr{ \si^{-1} S(n-k) > u_n \sqrt n } \, ;
\]
for $ n $ large enough we have $ \si u_n \sqrt n \ge n_\eps $, thus, $
k < n-n_\eps $ (otherwise $ \si^{-1} S(n-k) $ cannot exceed $
u_n \sqrt n $, since $ S(n-k) \le n-k $ a.s.), therefore
$ \Pr{ \si^{-1} S'(n) > u_n \sqrt n } \le \max_k \Pr{ \si^{-1} S(n-k)
> u_n \sqrt n } \le \exp \( -\frac{1-\eps}2 u_n^2 \) $ and
\[
\liminf_{n\to\infty} \Big( \frac{-2}{u_n^2} \ln \Pr{ \si^{-1} S'(n) >
u_n \sqrt n } \Big) \ge 1 - \eps
\]
for all $ \eps>0 $.
\end{proof}

In our situation
\begin{equation}\label{2.10}
\Pr{ \tau = k } = p_k \quad \text{for } k=1,2,\dots
\end{equation}
(for $ p_k $ see \eqref{1.*}) and roughly
\begin{equation}\label{2.11}
X \approx \pm \sqrt{\tau} \cdot \tau^{-\be}
\end{equation}
(recall \eqref{1.1} and \eqref{1.4}); in order to make it exact,
$ \sqrt\tau $ should be replaced with its integral part, but this
small correction is left to the reader.

\begin{proof}[Proof of Lemma \ref{2.3}]
By \eqref{1.3}, $ \ln \mu_n \sim -n^\al $; by \eqref{1.*}, $ \ln
p_n \sim -n^\al $; by \eqref{2.10}, \eqref{2.11} and \eqref{1.2} we
see that $ \Ex \tau < \infty $, $ \Ex X = 0 $, $ \Ex X^2 \ne 0 $,
and \eqref{2.6} is satisfied (with $ \eps=1 $). Thus,
Theorem \ref{2.7} gives MDP for $ (\ti S_{1+B_1+n})_{n=0,1,\dots} $,
and Lemma \ref{2.9} gives \eqref{2.8} for $ (\ti S_n)_{n=1,2,\dots}
$.
\end{proof}

\begin{proof}[Proof of Lemma \ref{2.4}]
By \eqref{1.3},
\begin{equation}\label{2.12}
\ln \Pr{ A_1+B_1>k } = \ln \Pr{ A_n+B_n>k } = -k^\al \quad \text{for }
k=1,2,\dots
\end{equation}
Similarly to \eqref{2.11},
\begin{equation}\label{2.13}
\pm (A_1+B_1)^\be S'_n \approx
\begin{cases}
 (\sqrt{A_1+B_1}-A_1)^+ &\text{if } 1+B_1 \le n,\\
 0 &\text{otherwise};
\end{cases}
\end{equation}
\begin{multline}\label{2.14}
\pm (A_n+B_n)^\be S''_n \approx \\
 \approx \Big( \min\(n,\,n-A_n+\sqrt{A_n+B_n}\,\) - \max(1,n-A_n+1) + 1 \Big)^+ = \\
= \Big( \min \( A_n, \sqrt{A_n+B_n} \,\) - (A_n-n)^+ \Big)^+ \, ;
\end{multline}
here $ x^+ = \max(x,0) $.
Note that
\begin{equation}\label{2.15}
|S'_n| \le (A_1+B_1)^{0.5-\be} \, , \quad |S''_n| \le
(A_n+B_n)^{0.5-\be} \, .
\end{equation}

Let $ \ga \in (0,u) \cup (u,v) \cup (v,0.5) $ and $ c>0 $ be
given. (Recall \eqref{1.7}.)

Case 1: $ \ga \in (0,u) $.

We have $ \ga < \frac{\al}{ 2(1-\al-2\be) } $, that is,
$ \frac{\ga+0.5}{0.5-\be}\al > 2\ga $. Taking into account that
\begin{multline*}
\PR{ \frac{ S'_n+S''_n }{ \sqrt n } > c n^\ga } \le \Pr{ S'_n > 0.5 c
 n^{\ga+0.5} } + \Pr{ S''_n > 0.5 c n^{\ga+0.5} } \le \\
\le \Pr{ (A_1+B_1)^{0.5-\be} > 0.5 c n^{\ga+0.5} } + \Pr{
(A_n+B_n)^{0.5-\be} > 0.5 c n^{\ga+0.5} }
\end{multline*}
we get
\[
\frac1{n^{2\ga}} \ln \PR{ \frac{ S'_n+S''_n }{ \sqrt n } > c n^\ga
} \le \frac{\ln 2}{n^{2\ga}}
- \frac1{n^{2\ga}} \cdot \const \cdot \Big(
n^{ \frac{\ga+0.5}{0.5-\be} } \Big)^\al \to -\infty \, .
\]

Case 2: $ \ga \in (u,\al] $.

We have $ \ga > \frac{\al}{ 2(1-\al-2\be) } $, that is,
$ \frac{\ga+0.5}{0.5-\be} < \frac{2\ga}{\al} $. For large $ n $, we
choose $ c_n \in \{1,2,\dots\} $ such that
\[
n^{\frac{\ga+0.5}{0.5-\be}} \ll c_n \ll n^{\frac{2\ga}{\al}} \, ,
\]
note that $ \sqrt{c_n} \ll n^{\ga/\al} \le n $, and choose $
a_n,b_n \in \{1,2,\dots\} $ such that $ a_n+b_n = c_n $ and
\[
\sqrt{ a_n+b_n } < a_n < n \, .
\]
Taking into account that $ \ln \mu_n \sim -n^\al $ we have
\[
\ln \Pr{ A_n=a_n, B_n=b_n } = \ln \mu_{a_n+b_n} \sim -(a_n+b_n)^\al \,
.
\]
Conditionally, given $ A_n = a_n $ and $ B_n = b_n $ we have
by \eqref{2.14}
\[
S''_n \approx \pm (a_n+b_n)^{0.5-\be} \, ; \quad |S''_n| \approx
c_n^{0.5-\be} \gg n^{\ga+0.5} \, .
\]
Also, $ S'_n $ and $ S''_n $ are conditionally independent, since $
[1,n] $ contains (at least one) renewal time $ n - a_n $. Thus, for
large $ n $,
\[
\PR{ \frac{ S'_n+S''_n }{ \sqrt n } > c n^\ga } \ge \frac12 \Pr{ S''_n
> c n^{\ga+0.5} } \ge \frac14 \Pr{ A_n=a_n, B_n=b_n } \, ,
\]
and we get
\[
\frac1{n^{2\ga}} \ln \PR{ \frac{ S'_n+S''_n }{ \sqrt n } > c n^\ga
} \ge -\frac{\ln 4}{n^{2\ga}} - \frac{c_n^\al}{n^{2\ga}} \( 1 +
o(1) \) \to 0 \, .
\]

Case 3: $ \ga \in [\al,v) $.

The proof of Case 2 needs only the following modifications. In the
start: we have $ \ga < 0.5-2\be $, that is,
$ \frac{\ga+0.5}{0.5-\be} < 2 $, and we take
\[
n^{\frac{\ga+0.5}{0.5-\be}} \ll c_n \ll n^2 \, .
\]
In the end: $ c_n^\al \ll n^{2\al} \le n^{2\ga} $.

Case 4: $ \ga \in (v,0.5) $.

Combining \eqref{2.15} with trivial inequalities $ |S'_n| \le n
(A_1+B_1)^{-\be} $, $ |S''_n| \le n (A_n+B_n)^{-\be} $ we get
\[
|S'_n| \le (A_1+B_1)^{-\be} \min ( n, \sqrt{A_1+B_1} ) \, , \quad
|S''_n| \le (A_n+B_n)^{-\be} \min ( n, \sqrt{A_n+B_n} ) \, .
\]
We note that generally
\[
x^{-\be} \min(n,\sqrt x) \le n^{1-2\be} \quad \text{for all } x \in
(0,\infty)
\]
(indeed, the function $ x \mapsto x^{-\be} \min(n,\sqrt x) $ is
increasing on $ (0,n^2] $ since $ \be < 0.5 $, and decreasing on $
[n^2,\infty) $ since $ \be \ge 0 $).

Thus, $ |S'_n| \le n^{1-2\be} $, $ |S''_n| \le n^{1-2\be} $ a.s., and
we get for large $ n $
\[
\PR{ \frac{ S'_n+S''_n }{ \sqrt n } > c n^\ga } = 0 \, ,
\]
since $ 0.5 - 2\be = v < \ga $.
\end{proof}

\section[Combining such processes]
  {\raggedright Combining such processes}
\label{sect3}
Given $ 0 < u_1 < v_1 < u_2 < v_2 < 0.5 $, we construct two
independent processes $ (X_t^{(1)})_t $, $ (X_t^{(2)})_t $ as in
Sect.~\ref{sect1}, the former satisfying Theorem \ref{Th1} for $ G_0 =
(u_1,v_1) $, the latter --- for $ G_0 = (u_2,v_2) $. Then their sum
\[
X_t = X_t^{(1)} + X_t^{(2)} \, ,
\]
being normalized (divided by $ \si $), satisfies Theorem \ref{Th1} for
$ G_0 = (u_1,v_1) \cup (u_2,v_2) $.

\begin{proof}
Case 1: $ \ga \in (0,u_1) \cup (v_1,u_2) \cup (v_2,0.5) $.

By Theorem \ref{1.5}, both processes
\[
\si_i^{-1} S_n^{(i)} = \frac{ X_1^{(i)} + \dots + X_n^{(i)}
}{ \si_i \sqrt n }
\]
(for $ i=1,2 $) satisfy LDP with speed $ (n^{2\ga})_n $ and rate
function $ c \mapsto 0.5 c^2 $. By independence, the two-dimensional
process $ \( \si_1^{-1} S_n^{(1)}, \si_2^{-1} S_n^{(2)} \)_n $
satisfies LDP with speed $ (n^{2\ga})_n $ and rate function $
(c_1,c_2) \mapsto 0.5 (c_1^2+c_2^2) $. By the contraction principle,
the linear combination $ (\si^{-1} S_n)_n $ (where $ S_n = S_n^{(1)}
+ S_n^{(2)} $ and $ \si = \sqrt{\si_1^2+\si_2^2} \, $) satisfies LDP with
speed $ (n^{2\ga})_n $ and rate function $ c \mapsto 0.5 c^2 $.

Case 2: $ \ga \in (u_1,v_1) $.

The argument is the same, but now the first rate function is
identically zero, thus, the two-dimensional rate function is $
(c_1,c_2) \mapsto 0.5 c_2^2 $, and the contraction principle returns
identically zero.

Case 3: $ \ga \in (u_2,v_2) $.

Similar to Case 2.
\end{proof}

A finite number of independent processes can be combined similarly,
which completes the proof of Theorem \ref{Th1}.

\bigskip
\filbreak
{
\small
\begin{sc}
\parindent=0pt\baselineskip=12pt
\parbox{4in}{
Boris Tsirelson\\
School of Mathematics\\
Tel Aviv University\\
Tel Aviv 69978, Israel
\smallskip
\par\quad\href{mailto:tsirel@post.tau.ac.il}{\tt
 mailto:tsirel@post.tau.ac.il}
\par\quad\href{http://www.tau.ac.il/~tsirel/}{\tt
 http://www.tau.ac.il/\textasciitilde tsirel/}
}

\end{sc}
}
\filbreak

\end{document}